\documentclass{birkjour}
\usepackage{amsmath,amsfonts,latexsym,amscd,color,amssymb,mathrsfs}
\usepackage{graphicx}
\usepackage{float}
\usepackage{enumerate}
\newtheorem{theorem}{Theorem}

\newtheorem{corollary}[theorem]{Corollary}

\newtheorem{remark}[theorem]{Remark}
\newtheorem{example}[theorem]{Example}

\DeclareMathOperator{\ii}{{i}}

\newcommand{\Comp}{{\mathbb{C} }}

\begin{document}
\title[Global properties of eigenvalues of  perturbations of matrices.]{ Global properties of eigenvalues of parametric rank one perturbations for unstructured and structured matrices II. }

\author[A.C.M. Ran]{Andr\'e C.M. Ran}
\address{Afdeling Wiskunde, Faculteit
    der Exacte Wetenschappen, Vrije Universiteit Amsterdam, De Boelelaan
    1111, 1081 HV Amsterdam, The Netherlands
and Research Focus: Pure and Applied Analytics, North West University, South Africa. \\ ORCID: 0000-0001-9868-8605. }
   \email{a.c.m.ran@vu.nl}. 
\author[M. Wojtylak]{Micha\l{} Wojtylak}
\address{Wydzia\l{} Matematyki i Informatyki, Uniwersytet Jagiello\'nski, 
ul. \L ojasiewicza 6, 30-348 Krak\'ow, Poland.\\
ORCID:  0000-0001-8652-390X}
\email{michal.wojtylak@uj.edu.pl}
\subjclass{Primary: 15A18, 47A55}

\keywords{Eigenvalue perturbation theory}

\begin{abstract}
We present in this note a correction to Theorem 17 in \cite{WR} and sharpen the estimates for eigenvalues of parametric rank one perturbations given in that theorem.
\end{abstract}

\maketitle

\section{Introduction and Preliminaries} 

This note concerns an erratum and addendum to \cite{WR}, in particular to  Theorem 17.
One of the main points of the theorem is to show the asymptotic behavior of the eigenvalues of $B(\tau)=A+\tau uv^*$ when $|\tau|\to\infty$. The statement is that these eigenvalues trace out a set of curves which asymptotically approximates a set of non-intersecting circles. The statement of the theorem needs a small correction. The proof  contains some independent miscalculations, which fortunately, 
had no effect on the validity of the corresponding statements. Therefore, we find it necessary to present a fully corrected version with a new proof. The detailed list of corrections can be found in Section~\ref{s:core}.
The results are illustrated by an example.

First we repeat the setting of \cite{WR}, to make this erratum and addendum independently readable.
Let $A$ be an $n\times n$ complex matrix, and let $u,v$ be two vectors in $\mathbb{C}^n$. We consider the eigenvalues of the parametric rank one perturbation $B(\tau)=A+\tau uv^*$  of $A$. Denote by $m_A(\lambda)$ the minimal polynomial of $A$, and define
$$
p_{uv}(\lambda)=v^*m_A(\lambda)(\lambda I_n-A)^{-1} u.
$$
Observe that this is a polynomial (see \cite{WR1}), and 
\begin{align}\label{detBA}
\det(\lambda I_n-B(\tau)) &= \det(\lambda I_n -A)\cdot (1-\tau v^* (\lambda I_n-A)^{-1} u) \nonumber \\
&= \frac{\det(\lambda I_n -A)}{m_A(\lambda)} (m_A(\lambda)-\tau p_{uv}(\lambda)).
\end{align}
Also introduce $Q(\lambda)=v^*(\lambda I_n-A)^{-1}u$. By Proposition 2 in \cite{WR}, if $\lambda_0$ is not an eigenvalue of $A$, then it is an eigenvalue of $B(\tau_0)$ of multiplicity $\kappa\geq 1$,  if and only if 
$\tau_0Q(\lambda_0)=1, Q^\prime(\lambda_0)=0, \ldots , Q^{(\kappa-1)}(\lambda_0)=0, Q^{(\kappa)}(\lambda_0)\not= 0$. In this case $\lambda_0$ has geometric multiplicity one.

\section{Main results}

We are interested in the behavior of the eigenvalues of $B(\tau)$, where $\tau=te^{\ii\theta}$ as functions of $\theta$ for fixed $t$, and then in particular in what happens as $t\to\infty$.

For that reason, introduce for $t>0$ the set
$$
\sigma(A,u,v;t)= \bigcup_{0\leq\theta\leq 2\pi} \sigma(A+te^{\ii\theta}uv^*)\setminus\sigma(A).
$$
Theorem 17 in \cite{WR} describes the asymptotic behavior of (parts of) these sets as $t\to\infty$. 
We correct, complete and extend the result.

\begin{theorem}\label{vAu}
Let $A\in\Comp^{n\times n}$, $u,v\in\Comp^n$ and let $l\in\mathbb{N}$ denote the degree of the minimal polynomial $m_A(\lambda)$. 
If 
\begin{equation}\label{vAue0}
v^*u=\cdots =v^*A^{l-1}u=0
\end{equation}
then $p_{uv}(\lambda)\equiv0$ and $\sigma(A+\tau uv^*)=\sigma(A)$ for any $\tau\in\Comp$.
If
\begin{equation}\label{vAue}
v^*u=\cdots =v^*A^{\kappa-1}u=0,\quad v^*A^{\kappa}u\not= 0,
\end{equation}
for some $\kappa \in \{0,\dots,l-1 \}$ then the following statements hold.
\begin{enumerate}[\rm (i)]
\item\label{new-i} $p_{uv}(\lambda)$ is of degree $l-\kappa-1$;
\item\label{new-ii} $l-\kappa-1$ eigenvalues  of $B(\tau)$ converge to the roots of $p_{uv}(\lambda)$ as
$\tau\to\infty$; 
\item\label{new-iii} there are $\kappa+1$ eigenvalues $\lambda_1(\tau),\dots,\lambda_{\kappa+1}(\tau)$ of $A+\tau uv^*$ having the following Puiseux expansion at $\tau=\infty$
\begin{equation}\label{eq:lambdatau1a}
\lambda_j(\tau)=c_{-1}\tau^{\frac{1}{\kappa+1}}+ c_0+ c_1\tau^{-\frac{1}{\kappa+1}}+\cdots,\quad j=1,\dots \kappa+1, 
\end{equation}
where
\begin{align*}
c_{-1}&=(v^*A^\kappa u)^{\frac{1}{\kappa+1}},\\
c_0&=\frac{1}{\kappa+1}\cdot\frac{v^*A^{\kappa+1}u}{v^*A^\kappa u},\\
c_1&=
\frac{1}{\kappa+1}\cdot\frac{1}{(v^*A^\kappa u)^{1+\frac{1}{\kappa+1}}}\cdot
\left( v^*A^{\kappa+2}u-\frac{\kappa+2}{2(\kappa+1)}\cdot\frac{(v^*A^{\kappa+1}u)^2}{v^*A^\kappa u}\right);
\end{align*}


\item\label{new-iv} if $\zeta$ is a root of the polynomial $p_{uv}(\lambda)$ of multiplicity  $k$ and is not a root of $m_A(\lambda)$, then there are $k$ eigenvalues of $A+\tau uv^*$ converging to $\zeta$ with $\tau\to\infty$ having the following Puiseux expansion at $\tau=\infty$
\begin{equation}\label{eq:lambdatau1a-next}
\lambda_j(\tau) =\zeta-b_{1}\tau^{-\frac{1}{k}}-b_{2}\tau^{-\frac{2}{k}}-b_{3}\tau^{-\frac{3}{k}} - \cdots ,\quad j=1,\dots,k,
\end{equation}
where, using
$a_{m}=a_m(\zeta)=v^*(\zeta I_n-A)^{-m}u$ for $m\geq0$, 
\begin{align*}
b_{1}&=b_1(\zeta)=a_{k+1}^{-\tfrac{1}{k}},\qquad a_{k+1}\not= 0,\\
b_{2}&=b_2(\zeta)=-\frac{1}{k}\cdot \frac{b_1^2\, a_{k+2}}{a_{k+1}}.
\end{align*}
\end{enumerate}
\end{theorem}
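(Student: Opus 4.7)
The whole argument rests on the scalar equation $\tau Q(\lambda)=1$, which by \eqref{detBA} and Proposition~2 of \cite{WR} characterises the eigenvalues of $B(\tau)$ in $\Comp\setminus\sigma(A)$, with algebraic multiplicity equal to the order of contact of $\tau Q$ with~$1$. Expanding $(\lambda I_n-A)^{-1}$ as a Neumann-type series at infinity gives
\[
Q(\lambda)=\sum_{m\geq 0}\frac{v^*A^m u}{\lambda^{m+1}}.
\]
If all $v^*A^m u$ with $m\leq l-1$ vanish, then Cayley--Hamilton propagates the vanishing to every $m$, so $Q\equiv 0$, $p_{uv}=m_A Q\equiv 0$, and \eqref{detBA} collapses to $\det(\lambda I-B(\tau))=\det(\lambda I-A)$; this handles the preliminary statement. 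Under \eqref{vAue} the same expansion shows $Q(\lambda)=(v^*A^\kappa u)\lambda^{-(\kappa+1)}(1+O(\lambda^{-1}))$, from which $p_{uv}=m_A Q$ has degree $l-\kappa-1$ with leading coefficient $v^*A^\kappa u$, giving~(i). For (ii), the factorisation in \eqref{detBA} reduces the question to the $\tau$-dependent factor $m_A(\lambda)-\tau p_{uv}(\lambda)$ of degree~$l$; dividing by $\tau$ and applying Rouch\'e's theorem on a fixed large disk containing all roots of $p_{uv}$ yields exactly $l-\kappa-1$ roots converging to those of $p_{uv}$, with the remaining $\kappa+1$ escaping to infinity.

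For (iii), I would substitute the ansatz \eqref{eq:lambdatau1a} into $\tau Q(\lambda)=1$. Setting $\mu=\tau^{1/(\kappa+1)}$ and $\lambda=c_{-1}\mu(1+w)$ with $w=O(\mu^{-1})$, the equation rewrites as
\[
\lambda^{\kappa+1}=\tau (v^*A^\kappa u)\Bigl(1+\tfrac{v^*A^{\kappa+1}u}{v^*A^\kappa u}\lambda^{-1}+\tfrac{v^*A^{\kappa+2}u}{v^*A^\kappa u}\lambda^{-2}+\cdots\Bigr),
\]
which forces $c_{-1}^{\kappa+1}=v^*A^\kappa u$; the $\kappa+1$ complex roots index the $\kappa+1$ branches. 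Taking the $(\kappa+1)$-th root, binomially expanding $(1+x)^{1/(\kappa+1)}$, and iterating with $\lambda\sim c_{-1}\mu$ on the right produces $c_0$ from the coefficient of $\mu^{0}$ and $c_1$ from the coefficient of $\mu^{-1}$; note that $c_0$ must come out independent of the choice of $c_{-1}$, as reflected in the stated formula.

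For (iv), I would localise around $\zeta$. Since $m_A(\zeta)\neq 0$, the function $Q=p_{uv}/m_A$ is analytic at $\zeta$ and has a zero of order exactly~$k$; the identity $Q^{(m)}(\zeta)=(-1)^m m!\,a_{m+1}(\zeta)$ translates this into $a_1=\cdots=a_k=0$ and $a_{k+1}\neq 0$, consistent with the stated hypothesis. Setting $\lambda=\zeta+\mu$ then turns $\tau Q(\lambda)=1$ into
\[
\tau\bigl((-1)^k a_{k+1}\mu^k+(-1)^{k+1}a_{k+2}\mu^{k+1}+\cdots\bigr)=1,
\]
and inverting this as a Puiseux series in $\tau^{-1/k}$ yields the $k$ branches together with the formulas for $b_1$ and $b_2$ by matching the first two coefficients. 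The main obstacle throughout is organising these formal-series inversions carefully enough to recover the first Puiseux coefficients in the compact closed form given in the statement; everything else is a mechanical Lagrange-type computation.
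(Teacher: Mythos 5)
Your proposal follows essentially the same route as the paper: eigenvalues of $B(\tau)$ outside $\sigma(A)$ are characterised by $\tau Q(\lambda)=1$, one expands $Q$ in a series at infinity (for the unbounded branches) or at $\zeta$ (for the bounded ones), posits a Puiseux ansatz, and matches coefficients. Your handling of the degenerate case via Cayley--Hamilton, and of (i)--(ii) via the leading term of $Q$ and Rouch\'e, is a correct and slightly more self-contained variant of what the paper does (the paper instead argues that a polynomial whose expansion at infinity contains only negative powers of $\lambda$ must vanish, and simply cites \cite{WR} for (i)--(ii)); in (iii) you extract the $(\kappa+1)$-th root and expand binomially where the paper raises the Puiseux series to the $(\kappa+1)$-th power and peels off coefficients by repeatedly subtracting the constant term and multiplying by $\lambda$ --- these are equivalent bookkeeping schemes. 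Your local expansion in (iv), including the factors $(-1)^k$, is the corrected form of the identity the original paper got wrong, so the setup is sound.

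The one substantive omission is that you never actually perform the coefficient matching: the explicit values of $c_0$, $c_1$ and $b_2$ are exactly what this erratum exists to establish (the original version of the theorem had $c_1$ wrong and an erroneous local expansion in (iv)), so declaring the inversion ``a mechanical Lagrange-type computation'' leaves unverified the very formulas being claimed, and the history of this result shows the computation is error-prone. To complete the proof you would need to carry the expansions to second order in each case, as is done in \eqref{eq:formulacnul}--\eqref{eq:formulacone} and \eqref{eq:csandas}.
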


The complex roots in equations \eqref{eq:lambdatau1a} and \eqref{eq:lambdatau1a-next} should be understood as in the theory of Puiseux series: each particular root determines uniquely the eigenvalue $\lambda_j$, see also Remark~\ref{remcol} below. 

\begin{remark}\rm Let us comment on some genericity issues appearing in Theorem~\ref{vAu}. It was shown in \cite{WR1} that for generic $u,v$ (i.e., $u,v$ with arbitrary complex entries except an algebraic subset of $\Comp^{2n}$, depending possibly on $A$) we have that $v^*u\neq 0$ and the roots of $p_{uv}(\lambda)$ are all simple and disjoint with the roots of  $m_A(\lambda)$. Hence, in Theorem~\ref{vAu} we have generically $\kappa=1$ and in part \eqref{new-iv} for each root $\zeta$ of $p_{uv}(\lambda)$ we have $k=1$ and $m_A(\zeta)\neq0$. However, in general, many different situations might occur. For example, a root of $p_{uv}(\lambda)$ might be a root of $m_A(\lambda)$, as can be seen in Example~\ref{onemore} below. 

Further, note that the multiplicities of the roots of $p_{uv}(\lambda)$ and the number $\kappa$ are in some relation, e.g., due to Theorem~\ref{vAu}\eqref{new-i}.
In particular, if $\kappa=l-1$ then $p_{uv}(\lambda)$ is a constant, nonzero polynomial and the  only limit point of eigenvalues is infinity. 
There are, however, some other hidden constraints relating $\kappa$ and the multiplicities of $p_{uv}(\lambda)$ and their nature  needs to be studied more intensively in future work. 

All there comments explain the role of the assumptions in Corollary~\ref{cor:curves} below.  
\end{remark}

\begin{corollary}\label{cor:curves}  With the notation of Theorem~\ref{vAu},  if \eqref{vAue} holds and all the roots $\zeta_1,\dots,\zeta_\nu$ of $p_{uv}(\lambda)$ are not roots of  $m_A(\lambda)$, then
$\sigma(A,u,v;t)$  for sufficiently large  $t=|\tau|$  can be parametrized by disjoint curves $\Gamma_1(\theta),\dots,\Gamma_{\nu+1}(\theta)$, where the $\kappa +1$ eigenvalues which go to infinity trace out a curve
$$
\Gamma_{\nu+1}(\theta) =c_{-1} t^{\frac{1}{\kappa+1}}e^{\ii\theta}+c_0+c_1 t^{-\frac{1}{\kappa+1}}e^{-\ii\theta}+ O(t^{-\frac{2}{\kappa+1}}),\quad 0\leq\theta\leq2\pi,
$$
while the $k_j$ eigenvalues near $\zeta_j$ trace out a curve $\Gamma_j(\theta)$ which is of the form
$$
\Gamma_j(\theta)=\zeta_j-b_{1}(\zeta_j) t^{-\frac{1}{k_j}}e^{\ii\theta}
-b_{2}(\zeta_j)t^{-\frac{2}{k_j}}e^{2\ii\theta} +  O(t^{-\frac{3}{k_j}}), \ j\in\{1, \ldots , \nu\}.
$$
 In both cases above the $O$ is with respect to $t=|\tau|\to\infty$ and is uniform in $\theta\in[0,2\pi]$.
\end{corollary}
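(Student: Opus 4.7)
The plan is to read Corollary~\ref{cor:curves} as a direct repackaging of Theorem~\ref{vAu}\eqref{new-iii}--\eqref{new-iv}: substitute $\tau = te^{\ii\theta}$ into the Puiseux expansions given there and then glue the different branches into a single closed curve parametrized by $\theta \in [0,2\pi]$.

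I would start with $\Gamma_{\nu+1}$. Writing $\omega_j = e^{2\pi\ii(j-1)/(\kappa+1)}$ for the $(\kappa+1)$-th roots of unity and inserting $\tau = te^{\ii\theta}$ into \eqref{eq:lambdatau1a}, the $\kappa+1$ eigenvalues escaping to infinity take the form
\begin{equation*}
c_{-1}\omega_j t^{\frac{1}{\kappa+1}} e^{\ii\theta/(\kappa+1)} + c_0 + c_1 \omega_j^{-1} t^{-\frac{1}{\kappa+1}} e^{-\ii\theta/(\kappa+1)} + O\bigl(t^{-\frac{2}{\kappa+1}}\bigr).
\end{equation*}
The change of variable $\phi = \theta/(\kappa+1) + 2\pi(j-1)/(\kappa+1)$ turns the pair $(j,\theta) \in \{1,\dots,\kappa+1\} \times [0,2\pi]$ into a single parameter $\phi \in [0,2\pi]$, and renaming $\phi$ as $\theta$ gives exactly the stated formula for $\Gamma_{\nu+1}$. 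The identical procedure, using $k_j$-th roots of unity in \eqref{eq:lambdatau1a-next}, produces each $\Gamma_j$; here the hypothesis that every root $\zeta_j$ lies outside $\sigma(A)$ is what allows Theorem~\ref{vAu}\eqref{new-iv} to be applied at every root, so the displayed expansions exhaust $\sigma(A,u,v;t) \setminus \sigma(A)$.

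For disjointness and uniformity, I would note that $\Gamma_{\nu+1}$ lies in an annulus whose inner radius grows like $|c_{-1}| t^{1/(\kappa+1)}$, while each $\Gamma_j$ sits in a disc of radius $O(t^{-1/k_j})$ around the \emph{distinct} roots $\zeta_1,\dots,\zeta_\nu$ of $p_{uv}$; pairwise disjointness of all $\nu+1$ curves for $t$ sufficiently large is then immediate. Uniformity in $\theta$ of the $O$ estimates is automatic, because the Puiseux series of Theorem~\ref{vAu} converge on a full punctured neighborhood of $\tau = \infty$, so their tails beyond the displayed terms are majorized by convergent series whose bound depends on $t$ alone.

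The main obstacle — mild, but the only step demanding care — is the branch bookkeeping: one must verify that as $\theta$ traverses $[0,2\pi]$ the monodromy cyclically permutes the $\kappa+1$ (respectively $k_j$) Puiseux sheets so that the branches concatenate into one continuous closed curve rather than several disjoint arcs. This reduces to the standard fact that one revolution of $\tau$ around $\infty$ multiplies $\tau^{1/(\kappa+1)}$ by $e^{2\pi\ii/(\kappa+1)}$, shifting the sheet index by exactly one; the leading term of each Puiseux expansion identifies which branch one currently occupies, closing the bookkeeping loop and completing the parametrization.
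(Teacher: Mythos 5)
Your proposal is correct and follows essentially the route the paper intends: the corollary is treated there as a direct consequence of Theorem~\ref{vAu}, with the branch-gluing you describe spelled out in Remark~\ref{remcol} (one revolution of $\tau$ shifts the Puiseux sheet index by one, so the $\kappa+1$, resp.\ $k_j$, branches concatenate into a single closed curve). Your substitution $\phi=\theta/(\kappa+1)+2\pi(j-1)/(\kappa+1)$, the disjointness argument via the growing annulus versus shrinking discs around the distinct $\zeta_j$, and the uniformity of the $O$-terms from convergence of the Puiseux series in a full punctured neighborhood of $\tau=\infty$ all match what the paper leaves implicit.
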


\begin{proof}[Proof of Theorem~\ref{vAu}] 
Assume first \eqref{vAue0} holds. Expanding  the resolvent at infinity we get
 \begin{align*}
p_{uv}(\lambda)&=m_A(\lambda)v^* (\lambda I_n -A)^{-1}u\\
&=m_A(\lambda)v^*\left( \sum_{j=1}^\infty \lambda ^{-j-1} A^j\right)u\\
&= \sum_{j=l}^\infty \lambda ^{-j-1}m_A(\lambda) v^* A^ju  .
 \end{align*}
However, recall that $p_{uv}(\lambda)$ is a polynomial, hence $v^*A^ju=0$ for all $j=0,1,2,\dots$ and $p_{uv}(\lambda)\equiv0$. In consequence, $\det(\lambda I_n-B(\tau))=\det(\lambda I_n-A)$ for all $\tau\in\Comp$ by \eqref{detBA}.

Assume now that \eqref{vAue} holds.
Statements \eqref{new-i} and \eqref{new-ii} were proved in \cite{WR}, the proof did not contain errors. Let us now show (iii).
 For large values of $|\tau|$ consider the eigenvalues of $A+\tau uv^*$ which are not eigenvalues of $A$. These are among the roots of $m_A(\lambda)-\tau p_{uv}(\lambda)$. Dividing by $\tau$, and viewing $s=1/\tau$ as a variable, they are also roots of $sm_A(\lambda)-p_{uv}(\lambda)$, and hence we have by general theory concerning the behavior of roots of a polynomial under a perturbation such as this that the roots are given by Puiseux series, see e.g., \cite{Knopp}, Part II, Chapter V, and \cite{MP},Theorem 9.1.1. For the large eigenvalues of $B(\tau)$ we can make this more precise as follows. Recall that the eigenvalues of $B(\tau)$ which are not eigenvalues of $A$ satisfy $\tau v^*(\lambda I-A)^{-1}u=1$. For $|\lambda|> \|A\|$ this may be rewritten as
$$
1=\tau v^* \sum_{j=0}^\infty \frac{A^j}{\lambda^{j+1}} u=\tau \sum_{j=\kappa}^\infty \frac{v^*A^ju}{\lambda^{j+1}},
$$
where in the last step we used the definition of $\kappa$. Hence
\begin{equation}\label{eq:lambdatau1}
\frac{\lambda^{\kappa+1}}{\tau} =v^*A^\kappa u+\frac{1}{\lambda}v^*A^{\kappa+1}u+\frac{1}{\lambda^2}v^*A^{\kappa+2}u+\cdots .
\end{equation}
We can write $\lambda$ as a Puiseux series as
\begin{equation}\label{eq:lambdatau1aaaa}
\lambda=c_{-1}\tau^{\frac{1}{\kappa+1}}+ c_0+ c_1\tau^{-\frac{1}{\kappa+1}}+\cdots,
\end{equation}
or equivalently,
\begin{equation}\label{eq:lambdatau2}
\frac{\lambda}{\tau^{\frac{1}{\kappa+1}}} =c_{-1}+c_0\tau^{-\frac{1}{\kappa+1}}+c_1\tau^{-\frac{2}{\kappa+1}}+\cdots .
\end{equation}
Taking the $(\kappa+1)$'th power of this we arrive at
\begin{align}\label{eq:lambdatau3}
\frac{\lambda^{\kappa+1}}{\tau} = &c_{-1}^{\kappa+1} + (\kappa+1)c_{-1}^\kappa c_0
\tau^{-\frac{1}{\kappa+1}} \nonumber \\ +&
\left(  (\kappa+1)c_{-1}^\kappa c_1+\begin{pmatrix} \kappa+1 \\ 2\end{pmatrix} c_{-1}^{\kappa-1}c_0^2\right) \tau^{-\frac{2}{\kappa+1}} + O(\tau^{-\frac{3}{\kappa+1}}).
\end{align}
From the leading terms in \eqref{eq:lambdatau1} and \eqref{eq:lambdatau3} we see that 
\begin{equation}\label{eq:cminone}
c_{-1}^{\kappa+1} =v^*A^\kappa u.
\end{equation}
Now consider $\left(\frac{\lambda^{\kappa+1}}{\tau} -v^*A^\kappa u\right)\lambda$. By \eqref{eq:lambdatau1} this is equal to 
\begin{equation}\label{eq:formulasecondterm}
\left(\frac{\lambda^{\kappa+1}}{\tau} -v^*A^\kappa u\right)\lambda =v^*A^{\kappa+1}u+\frac{1}{\lambda}v^*A^{\kappa+2}u+O(\lambda^{-2}).
\end{equation}
On the other hand, inserting \eqref{eq:cminone} into \eqref{eq:lambdatau3}, and then inserting \eqref{eq:lambdatau1a} we obtain
{\small
\begin{align}\label{eq:formulacnul}
& \left(\frac{\lambda^{\kappa+1}}{\tau} -v^*A^\kappa u\right)\lambda =\nonumber \\
=& \lambda \left( (\kappa+1)c_{-1}^\kappa c_0
\tau^{-\frac{1}{\kappa+1}}  +
\left(  (\kappa+1)c_{-1}^\kappa c_1+\begin{pmatrix} \kappa+1 \\ 2\end{pmatrix} c_{-1}^{\kappa-1}c_0^2\right) \tau^{-\frac{2}{\kappa+1}} + O(\tau^{-\frac{3}{\kappa+1}})\right)
\nonumber \\
=&\left(c_{-1}\tau^{\frac{1}{\kappa+1}}+ c_0+ c_1\tau^{-\frac{1}{\kappa+1}}+O(\tau^{-\frac{2}{\kappa+1}}\right)\nonumber \\
\cdot &\left( (\kappa+1)c_{-1}^\kappa c_0
\tau^{-\frac{1}{\kappa+1}}  
\left(  (\kappa+1)c_{-1}^\kappa c_1+\begin{pmatrix} \kappa+1 \\ 2\end{pmatrix} c_{-1}^{\kappa-1}c_0^2\right) \tau^{-\frac{2}{\kappa+1}} + O(\tau^{-\frac{3}{\kappa+1}})\right)
\nonumber  \\
=& (\kappa+1)c_{-1}^{\kappa+1}c_0+ (\kappa+1)\left( c_{-1}^\kappa c_0^2+
c_{-1}^{\kappa+1}c_1 +\frac{\kappa}{2}c_{-1}^\kappa c_0^2\right)\tau^{-\frac{1}{\kappa+1}}+ O(\tau^{-\frac{2}{\kappa+1}})
\nonumber  \\
=&(\kappa+1)c_{-1}^{\kappa+1}c_0+(\kappa+1) \left( \frac{\kappa+2}{2}c_{-1}^\kappa c_0^2+
c_{-1}^{\kappa+1}c_1\right)\tau^{-\frac{1}{\kappa+1}}+ O(\tau^{-\frac{2}{\kappa+1}})
\end{align}
}

Comparing formulas \eqref{eq:formulasecondterm} and \eqref{eq:formulacnul} we see
\begin{equation}\label{eq:formulacnula}
(\kappa+1)c_{-1}^{\kappa+1}c_0=v^*A^{\kappa+1}u.
\end{equation}
Using \eqref{eq:cminone} we obtain 
\begin{equation}\label{eq:cnulb}
c_0=\frac{1}{\kappa+1} \cdot \frac{v^*A^{\kappa+1}u}{v^*A^\kappa u}.
\end{equation}
In addition, subtracting the constant term in \eqref{eq:formulasecondterm} and then multiplying by $\lambda$ we obtain
\begin{equation}\label{eq:formulathirdterm}
\left( \left( \frac{\lambda^{\kappa+1}}{\tau} -v^*A^\kappa u\right)\lambda - v^*A^{\kappa+1}u\right)\lambda=v^*A^{\kappa+2u}+O(\lambda^{-1}).
\end{equation}
On the other hand, subtracting the constant term in \eqref{eq:formulacnul} and then multiplying by $\lambda$ we obtain, also using \eqref{eq:formulacnula},
\begin{align*}
&\left( \left( \frac{\lambda^{\kappa+1}}{\tau} -v^*A^\kappa u\right)\lambda - v^*A^{\kappa+1}u\right)\lambda\\
=&\lambda\cdot  (\kappa+1)\left( \frac{\kappa+2}2c_{-1}^\kappa c_0^2+
c_{-1}^{\kappa+1}c_1\right)\tau^{-\frac{1}{\kappa+1}}+ O(\tau^{-\frac{2}{\kappa+1}}) .
\end{align*}
Now use again \eqref{eq:lambdatau1a} to see that
\begin{align}\label{eq:formulacone}
&\left( \left( \frac{\lambda^{\kappa+1}}{\tau} -v^*A^\kappa u\right)\lambda - v^*A^{\kappa+1}u\right)\lambda\nonumber \\
=&
\left(c_{-1}\tau^{\frac{1}{\kappa+1}}+ c_0+O(\tau^{-\frac{1}{\kappa+1}})\right)
\nonumber\\ 
  \cdot&\left(  (\kappa+1)\left( \frac{\kappa+2}{2}c_{-1}^\kappa c_0^2+
c_{-1}^{\kappa+1}c_1\right)\tau^{-\frac{1}{\kappa+1}}+ O(\tau^{-\frac{2}{\kappa+1}}) \right)\nonumber\\
=& (\kappa+1)\left (\frac{\kappa+2}{2}c_{-1}^{\kappa+1} c_0^2+
c_{-1}^{\kappa+2}c_1\right) +O(\tau^{-\frac{1}{\kappa+1}}).
\end{align}
Comparing the constant terms in \eqref{eq:formulathirdterm} and \eqref{eq:formulacone} we see that
\begin{equation*}
(\kappa+1)\left(\frac{\kappa+2}{2}c_{-1}^{\kappa+1} c_0^2+
c_{-1}^{\kappa+2}c_1 \right)=v^*A^{\kappa+2}u.
\end{equation*}
Solving this equation for $c_1$ using the formulas \eqref{eq:cminone} and \eqref{eq:cnulb},
one finds after some computation
$$
c_1=\frac{1}{\kappa+1}\cdot\frac{1}{(v^*A^\kappa u)^{1+\frac{1}{\kappa+1}}}\cdot
\left( v^*A^{\kappa+2}u-\frac{\kappa+2}{2(\kappa+1)}\cdot\frac{(v^*A^{\kappa+1}u)^2}{v^*A^\kappa u}\right),
$$
as stated in the theorem.

\eqref{new-iv}
Since $\zeta$ is a root of $p_{uv}(\lambda)$ and by assumption is not a root of $m_A(\lambda)$ we have $v^*(\zeta I_n-A)^{-1}u=0$. Hence,  for $\lambda$ near $\zeta$ we expand 
\begin{align*}
v^*(\lambda I_n-A)^{-1}u & = v^*((\lambda-\zeta)I_n+(\zeta I_n-A))^{-1}u\\ &=
v^*((\lambda-\zeta)(\zeta I_n-A)^{-1}+I_n)^{-1}(\zeta I_n-A)^{-1}u\\
&=v^*\sum_{j=0}^\infty (-1)^j(\lambda-\zeta)^j(\zeta I_n-A)^{-(j+1)}u\\
&=v^*\sum_{j=1}^\infty (\zeta-\lambda)^j(\zeta I_n-A)^{-(j+1)}u.
\end{align*}
Recall  that any eigenvalue of $A+\tau uv^*$ which is not an eigenvalue of $A$ satisfies
$$
\frac{1}{\tau}=v^*(\lambda  I_n-A)^{-1}u.
$$
As  $\zeta$ is a root of $p_{uv}(\lambda)$ with multiplicity $k$, we have
\begin{equation}\label{handy}
\frac{1}{\tau}=(\zeta-\lambda)^{k}a_{k+1} +(\zeta-\lambda)^{k+1}a_{k+2}+\cdots, \qquad a_{k+1}\not= 0.
\end{equation}
 We express now $\lambda$ in a a Puiseux series in $\tau^{-1}$, this is possible because $\lambda$ is a root of $m_A(\lambda)-\tau p_{uv}(\lambda)=0$. 
Since $\zeta$ has multiplicity $k$ we have that
$$
\lambda =\zeta-b_{1}\tau^{-\frac{1}{k}}-b_{2}\tau^{-\frac{2}{k}}-b_{3}\tau^{-\frac{3}{k}} - \cdots 
$$
for some $b_{1}, b_{2}, \ldots$. 
Then $\zeta-\lambda=b_{1}\tau^{-\frac{1}{k}}+b_{2}\tau^{-\frac{2}{k}}+\cdots$, and inserting that in the equation \eqref{handy} we obtain
\begin{align}
\frac{1}{\tau}&= \frac{1}{\tau}b_{1}^{k}a_{k+1} + kb_{1}^{k-1}\tau^{-\frac{k-1}{k}}\cdot b_{2}\tau^{-\frac{2}{k}}a_{k+1} \nonumber
\\& \  + b_{1}^{k+1}\tau^{-\frac{k+1}{k}}a_{k+2} + {\rm smaller\ order\ terms}\nonumber \\
&=\frac{1}{\tau}b_{1}^{k}a_{k+1}
+\tau^{-\frac{k+1}{k}}\left( k b_{1}^{k-1} b_{2}a_{k+1} +  b_{1}^{k+1}a_{k+2}\right) +\cdots . \label{eq:csandas}
\end{align}
Equating terms of equal powers in $\tau$, for the terms $\frac{1}{\tau}$ on the  left and right hand sides gives
$$
b_{1}=a_{k+1}^{-\tfrac{1}{k}}=\left(\frac{1}{v^*(\zeta I_n-A)^{-(k+1)}u}\right)^{\frac{1}{k}}.
$$
The term on the right hand side with power $\tau^{-\frac{k+1}{k}}$ gives
$$
kb_{2}a_{k+1}+b_{1}^2a_{k+2}=0,
$$
i.e., 
$$
b_{2}=-\frac{1}{k}\cdot \frac{b_{1}^2\, a_{k+2}}{a_{k+1}}.
$$
This completes the proof. \end{proof}

Using the formula for $b_{1}$ we can derive an alternative formula for $b_{2}$ completely in terms of $a_{k+1}$ and $a_{k+2}$, which after some computation, and with proper care for the $k$th roots, becomes
$$
b_{2}=-\frac{1}{k}\cdot\frac{a_{k+2}}{a_{k+1}^{1+\tfrac{2}{k}}}
=-\frac{1}{k}\cdot \frac{v^*(\zeta I_n-A)^{-(k+2)}u}{\left(v^*(\zeta I_n-A)^{-(k+1)}u\right)^{1+\tfrac{2}{k}}}.
$$

\begin{remark}\label{remcol}\rm As stated in the Corollary \ref{cor:curves}, the $\kappa+1$ eigenvalues going to infinity together trace out the curve $\Gamma_{\nu+1}(\theta)$. Let us number the eigenvalues so that these are $\lambda_1(\tau), \ldots , \lambda_{\kappa+1}(\tau)$. 
After possibly renumbering these eigenvalues, one derives
from the theory of Puiseux series, see e.g.~\cite{Knopp}, 
$$
\lambda_j(te^{\ii\theta}) = \sqrt[\kappa+1]{t\, r}\, e^{ \ii(\frac{1}{\kappa+1} (\theta+\theta_{0}) +\frac{2j}{\kappa+1}\pi)}+O(1), \qquad j=1, 2, \ldots , \kappa+1,
$$
where $v^*A^\kappa u=r e^{\ii \theta_0}$. 
As $\theta\to 2\pi$ one has that
$$
\lambda_j(te^{\ii\theta})\to \lambda_{j+1}(t), \quad j=1,\dots , \kappa, \quad \lambda_{\kappa +1} (te^{\ii\theta})\to \lambda_1(t).
$$
\end{remark}

A similar statement holds for the $k_j$ eigenvalues near $\zeta_j$ tracing out the curve $\Gamma_j(\theta)$.

\section{List of corrections}\label{s:core}

\begin{itemize}
\item The analysis of the case indicated in formula \eqref{vAue0} above was missing in \cite{WR}. For completeness, we have included it in the current version.
\item The eigenvalues $\zeta_j$ in point (v) of Theorem 17  of  \cite{WR} were not assumed to be disjoint with the roots of $m_A(\lambda)$.  If some $\zeta_j$ is a root of $m_A(\lambda)$ several things might occure, which need an independent work. A reformulation of the Theorem, including that assumption, was necessary. 
\item A more detailed Puiseux expansion for the eigenvalues for $\tau\to\infty$ was given, both in the case of eigenvalues converging to infinity and to a root of $p_{uv}(\lambda)$. The version in \cite{WR} contained only the first term.
\item All results on the set $\sigma(A,u,v,t)$ were moved to a separate Corollary.  This is partially due to the two previous items, and partially due to presentation issues. 
\item On page 17, line 13 in \cite{WR} the formula given there for $c_1$ is wrong. 
\item On page 17, last three lines, and page 18, the first line in \cite{WR} the display formula contains a mistake which has an effect on the remainder of the proof. There is a factor $(-1)^k$ missing in the summation.
\item The formula at the bottom of page 18 in \cite{WR} contains an error. 
\end{itemize}

\section{Examples}

Let us begin with the promised example when  $m_A(\lambda)$ and $p_{uv}(\lambda)$ have a common root.

\begin{example}\label{onemore}\rm
Let
$$
A=\begin{bmatrix} 1 & 0 &0 \\ 0& 0.5 &0 \\0 & 0 & 1.5
\end{bmatrix},\quad u=v=\begin{bmatrix} 0 \\ 1 \\ 1\end{bmatrix}.
$$
Then $\kappa=1$ and $p_{uv}(\lambda)=2(\lambda-1)^2$. Hence, $A+\tau uv^*$ has  one eigenvalue  converging to infinity and two eigenvalues converging to $1$ with $\tau\to\infty$. However, unlike in Theorem~\ref{vAu}\eqref{new-iv}, in the plot of $\sigma(A,u,v,t)$ we do not have a circle around $1$ formed by two eigenvalues. In the current situation one eigenvalue remains at $1$ for all $\tau\in\Comp$, while the other one forms the full circle, see Figure \ref{f:om}. The plot in Figure \ref{f:om} shows these circles for $t=1$, together with the eigenvalues of $B(te^{\ii\theta})$ for $t=1$ and $\theta=\frac{2\pi j}{200}$ for $j=1, 2, \ldots , 200$.
\begin{figure}[h]
   \begin{center}
\includegraphics[height=4cm]{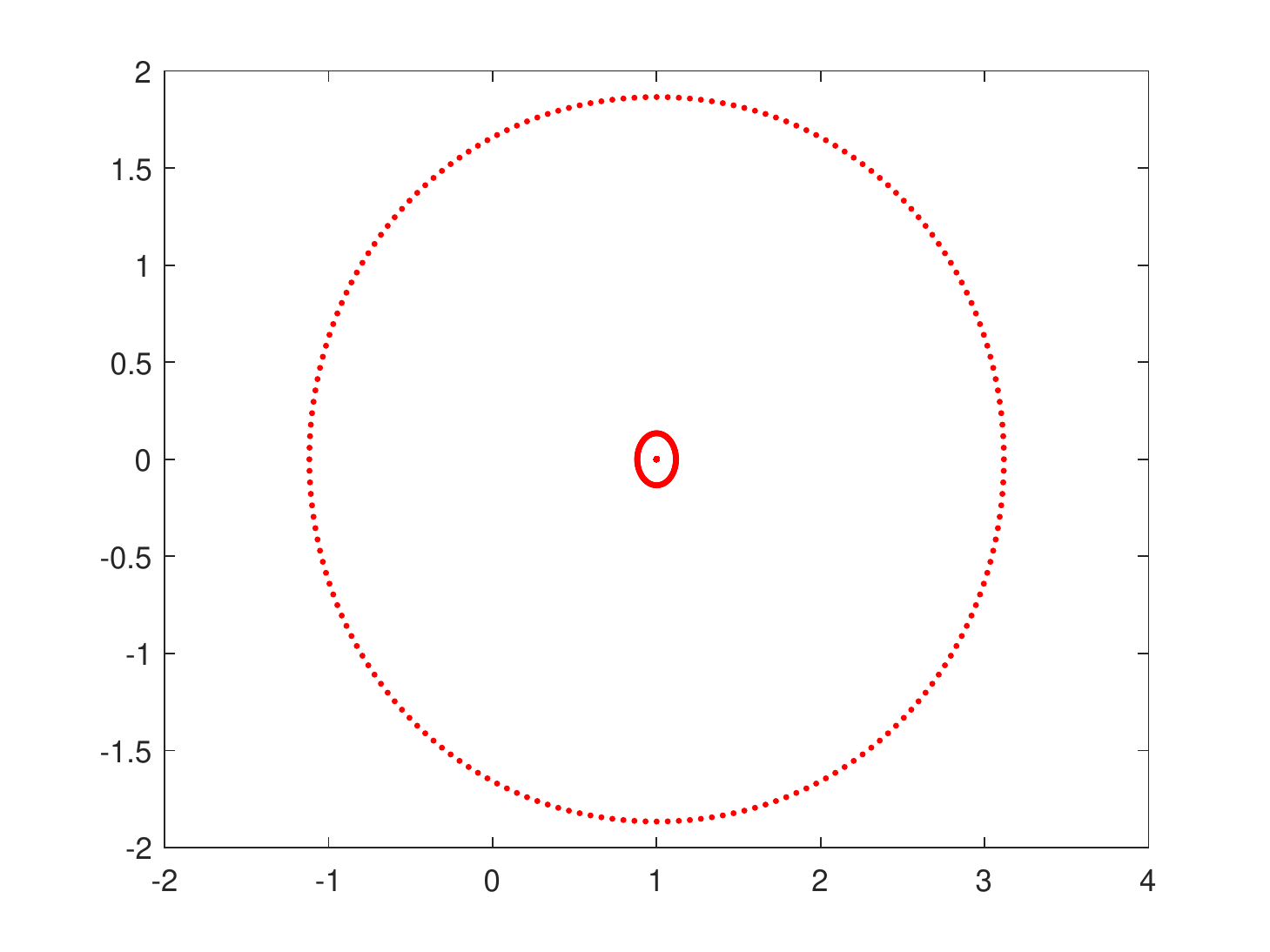}
\end{center}
\caption{Eigenvalues for $t=1$ in Example~\ref{onemore}.}\label{f:om}
\end{figure}
\end{example}

\begin{remark} \rm
The formulas in Theorem~\ref{vAu}\eqref{new-iii}  take an especially nice form in the generic case, when $\kappa=0$. In that case we have
$$
c_{-1}=v^*u, \quad c_0=\frac{v^*Au}{v^*u}, \quad c_{1}=\frac{1}{(v^*u)^2}\left(v^*A^2u -\frac{(v^*Au)^2}{v^*u}\right).
$$
As a first approximation we obtain the circle 
$$
\Gamma(\theta)\approx v^*u \cdot te^{\ii \theta} +\frac{v^*Au}{v^*u},
$$
while a further refinement is the curve
$$
\Gamma(\theta)\approx v^*u \cdot te^{\ii \theta} +\frac{v^*Au}{v^*u} 
+ \frac{1}{(v^*u)^2}\left(v^*A^2u -\frac{(v^*Au)^2}{v^*u}\right) \cdot\frac{1}{t}e^{-\ii\theta}.
$$

Let us also specialize the formulas Theorem~\ref{vAu}\eqref{new-iv}  for the generic case $k=1$. 
In this case we have
$$
b_1=\frac{1}{v^*(\zeta I_n-A)^{-2} u}, \quad b_2= -\frac{v^*(\zeta I_n -A)^{-3}u}{(v^*(\zeta I_n-A)^{-2} u)^3}.
$$
As a first approximation we obtain the circle
$$
\Gamma(\theta)\approx \zeta-b_{1}e^{\ii\theta}\tfrac{1}{t} 
$$
as a second approximation we obtain the curve
$$
\Gamma(\theta)\approx \zeta-b_{1}e^{\ii\theta}\tfrac{1}{t} -b_{2}e^{2\ii\theta}\tfrac{1}{t^2}.
$$
\end{remark}

\begin{example}\rm Consider $A=\begin{bmatrix} -1 & 0 & 0 \\ 0 & 0 & 0 \\ 0 & 0 & 1\end{bmatrix}$, $u=\begin{bmatrix} 1 \\ 1 \\ 1\end{bmatrix}, v=\begin{bmatrix} 1 \\ 2 \\ 3 \end{bmatrix}$. 
Then $p_{uv}(\lambda)= 6\lambda^2+2\lambda-2$, with zeroes $\zeta_1=-\tfrac{1}{6}+\tfrac{\sqrt{13}}{6} \approx 0.4343$ and $\zeta_2=-\tfrac{1}{6}-\tfrac{\sqrt{13}}{6} \approx -0.7676$. 
One computes that $b_{1}(\zeta_1)\approx 0.0489$ and $b_{1}(\zeta_2)\approx 0.0437$. So the first approximation of the curves $\Gamma_j(\theta)$ are given by $ \Gamma_j(\theta) \approx \zeta_j-b_{1}(\zeta_j)e^{\ii \theta}\tfrac{1}{t} $, which are circles with centers at $\zeta_j$ and radii $b_{1}(\zeta_j)$. The plots in Figure \ref{figone} show these circles for $t=1$, together with the eigenvalues of $B(te^{\ii\theta})$ for $t=1$ and $\theta=\frac{2\pi j}{200}$ for $j=1, 2, \ldots , 200$.

\begin{figure}[h]
\begin{center}
\includegraphics[height=3.5cm]{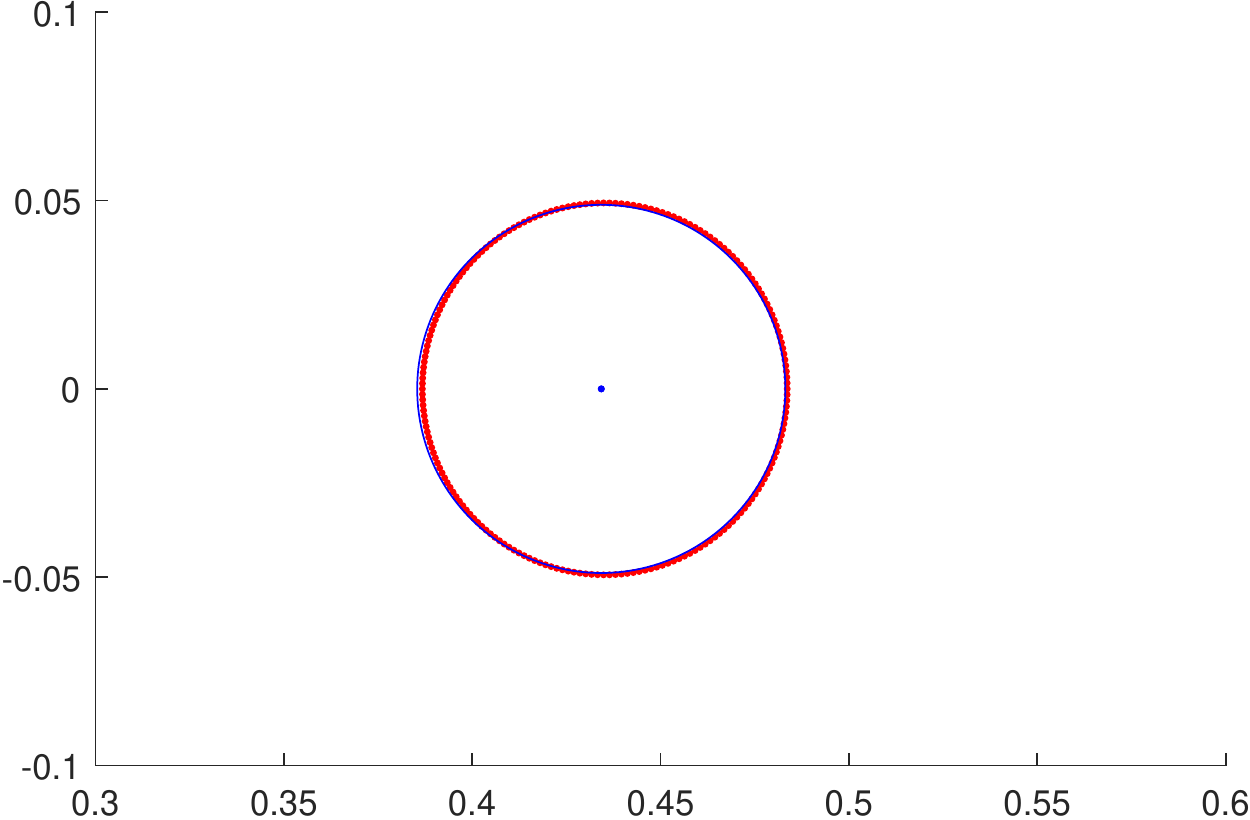}\ \includegraphics[height=3.5cm]{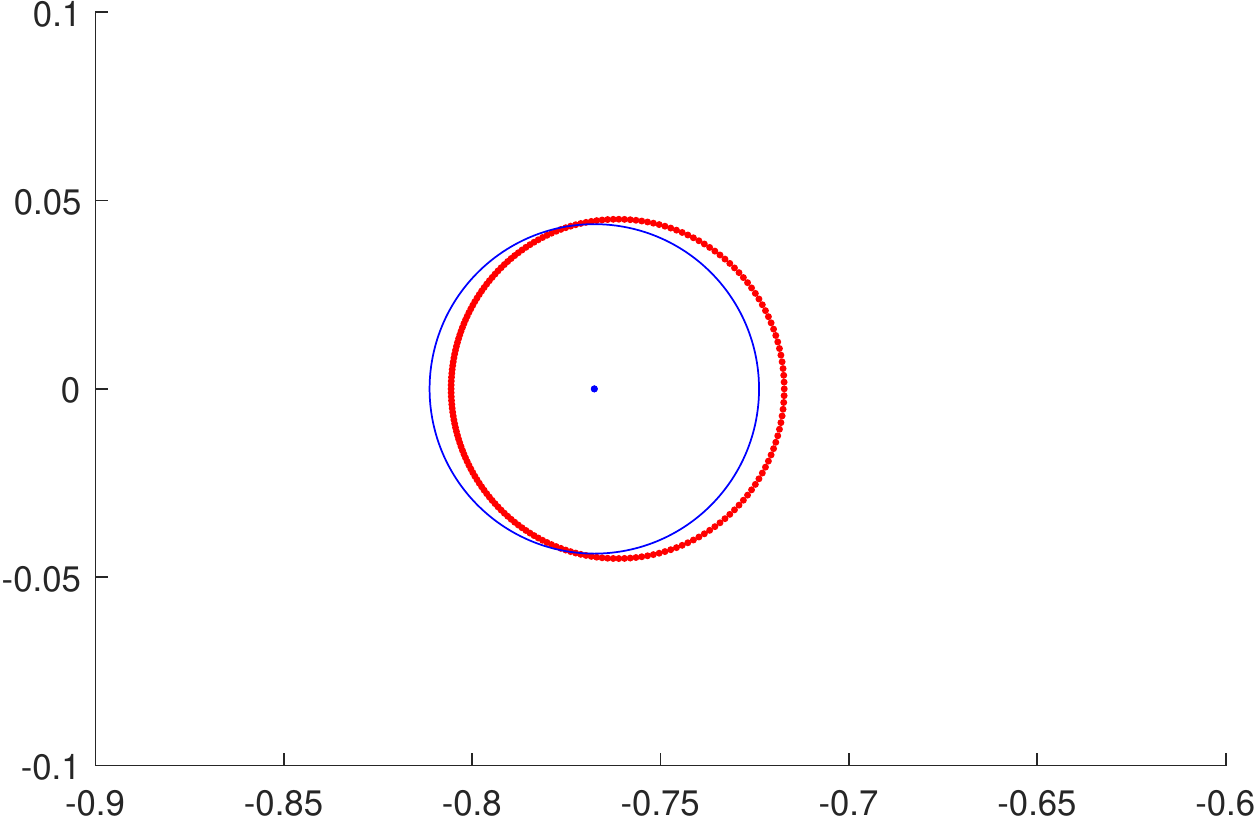}
\end{center}
\caption{Eigenvalues for $t=1$ in red, the circular approximation in blue. On the left around $\zeta_1$, on the right around $\zeta_2$.}\label{figone}
\end{figure}

It is obvious from the graphs that the circle around $\zeta_1$ is already a fairly good approximation for the eigenvalues of $B(\tau)$. However, the circle around $\zeta_2$ is definitely not a very nice approximation. We further compute that $b_{2}(\zeta_1)\approx -9.5594 \times 10^{-4}$ and $b_{2}(\zeta_2)\approx -0.062$. Focusing on the next approximation of $\Gamma_2(\theta)$ we get as an approximation $\Gamma_2(\theta)\approx -0.7676-0.0437e^{\ii\theta}+0.062e^{2\ii\theta}$.
Incorporating this extra term in the approximation leads to a much better approximation as is shown in
Figure \ref{figtwo}. 

\begin{figure}[h]
\begin{center}
\includegraphics[height=4cm]{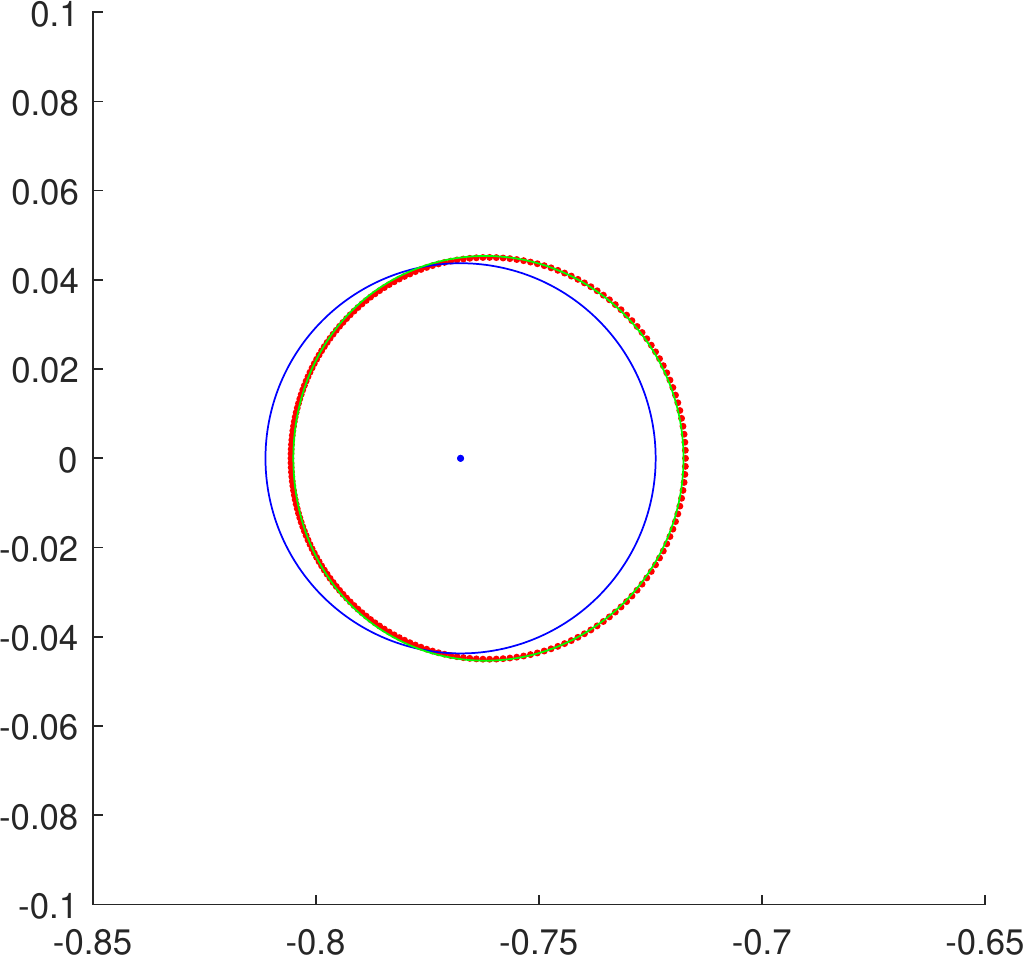}
\end{center}
\caption{Eigenvalues for $t=1$ close to $\zeta_2$ in red, the circular approximation in blue, in green the more accurate approximation with an additional term.}\label{figtwo}
\end{figure}

For the large eigenvalues of $B(\tau)$, already for $\tau=1$ the circular approximation is fairly good, the extra term in the approximation only makes it even better. In this case we have $c_{-1}=6$, $c_0=\frac{1}{3}$ and $c_1=\frac{23}{216}$.
See Figure \ref{figthree}, where for 200 equally spaces values of $\theta$ the eigenvalues of $B(e^{\ii\theta})$ are plotted, together with the circular approximation $\frac{1}{3}+6e^{\ii\theta}$ and the second approximation $\frac{1}{3}+6e^{\ii\theta}+\frac{23}{216}e^{-\ii\theta}$.

\begin{figure}[h]
\begin{center}
\includegraphics[height=4cm]{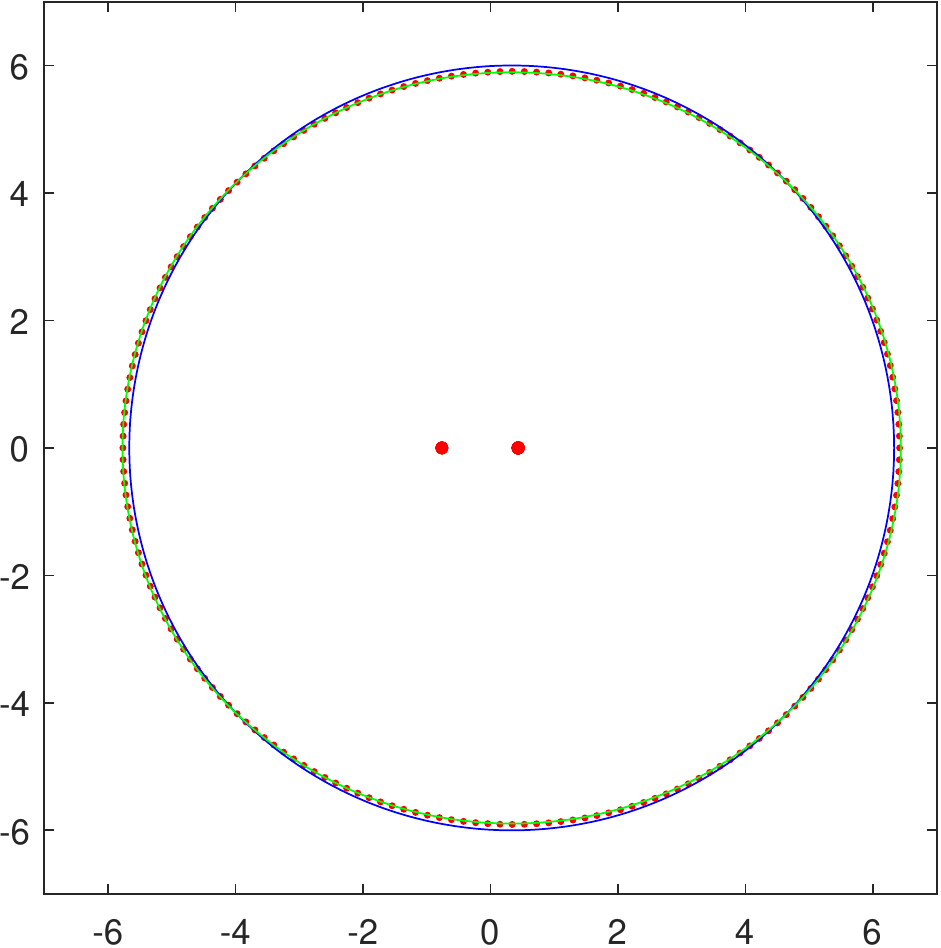}
\end{center}
\caption{The eigenvalues of $B(e^{\ii\theta})$ are plotted in red, the circular approximation is plotted in blue and the second approximation is plotted in green.}\label{figthree}
\end{figure}

For $t=\frac{1}{2}$ the picture is clearer. The approximating circle in this case is 
$\frac{1}{3}+3e^{\ii\theta}$, the second approximation is $\frac{1}{3}+3e^{\ii\theta}+\frac{46}{216}e^{-\ii\theta}$, as is shown in Figure \ref{figfour}

\begin{figure}[h]
\begin{center}
\includegraphics[height=4cm]{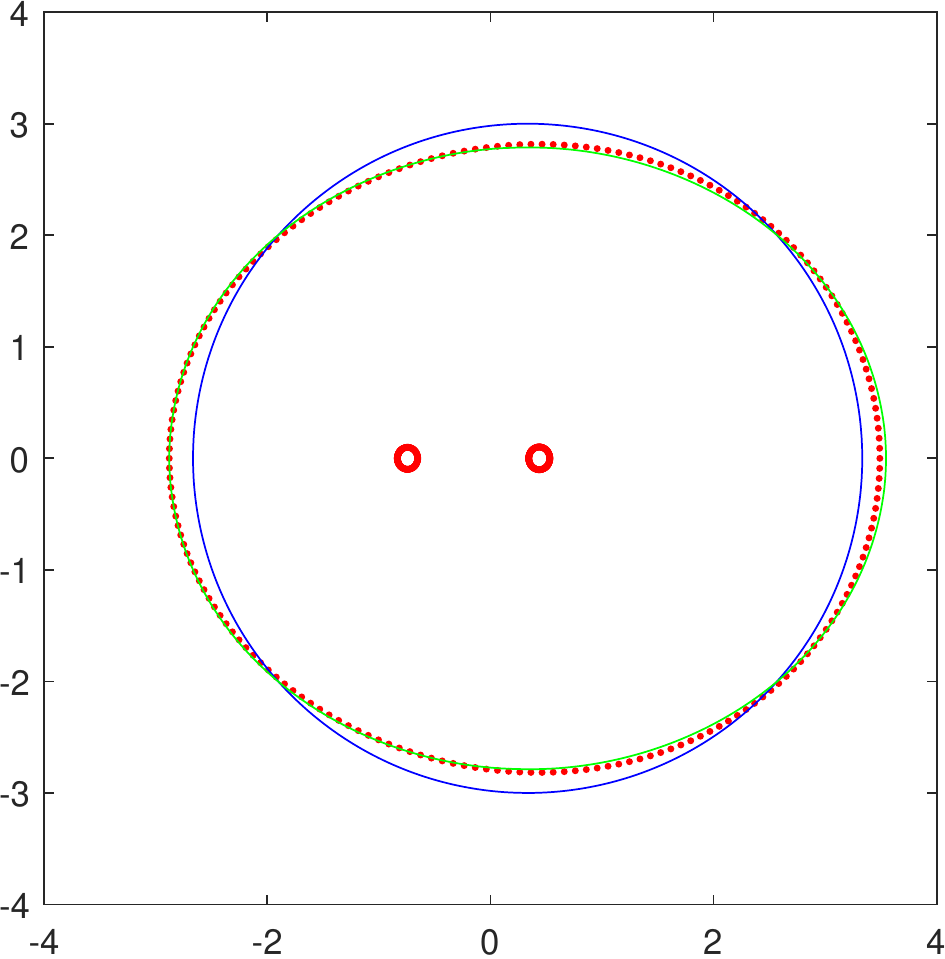}
\end{center}
\caption{The eigenvalues of $B(\tfrac{1}{2}e^{\ii\theta})$ are plotted in red, the circular approximation is plotted in blue and the second approximation is plotted in green.}\label{figfour}
\end{figure}
\end{example}

\begin{example}\rm Take $A$ and $u$ as in the previous example, but let $v=\begin{bmatrix} 1 \\ -\frac{1}{2} \\ -\frac{1}{2}\end{bmatrix}$. Then $v^*u=0$ and $v^*Au=-\frac{3}{2}$, so $\kappa=1$. Furthermore, $v^*A^2u=\frac{1}{2}$ and $v^*A^3u=-\frac{3}{2}$. In that case,
$c_{-1}^2=-\frac{3}{2}$, so $c_{-1}=\sqrt{\frac{3}{2}}\ii$, $c_0=-\frac{1}{6}$ and $c_1=\frac{-5\sqrt{2} }{24\sqrt{3}}\ii$. With $\tau=te^{\ii\theta}$ we obtain for the circular approximation of the two eigenvalues going to infinity
$$
\pm\sqrt{\frac{3}{2}} \sqrt{t}(-\sin(\frac{1}{2}\theta)+\ii\cos(\frac{1}{2}\theta))-\frac{1}{6}.
$$
Adding the extra terms with $c_1$ is a bit more involved; it gives
$$
-\frac{1}{6}\pm \left(\sqrt{\frac{3}{2}} \sqrt{t}(-\sin(\frac{1}{2}\theta)+\ii\cos(\frac{1}{2}\theta))
-\frac{1}{\sqrt{t}}\frac{5\sqrt{2} }{24\sqrt{3}}(\sin(-\frac{1}{2}\theta)-\ii\cos(-\frac{1}{2}\theta))\right).
$$
For $t=4$ Figure \ref{figfive} shows the situation, and also illustrates that the fit for the circle is not satisfactory, while the fit with the next term is essentially better. Obviously, for larger $t$ this will improve even further.

\begin{figure}[h]
\begin{center}
\includegraphics[height=4cm]{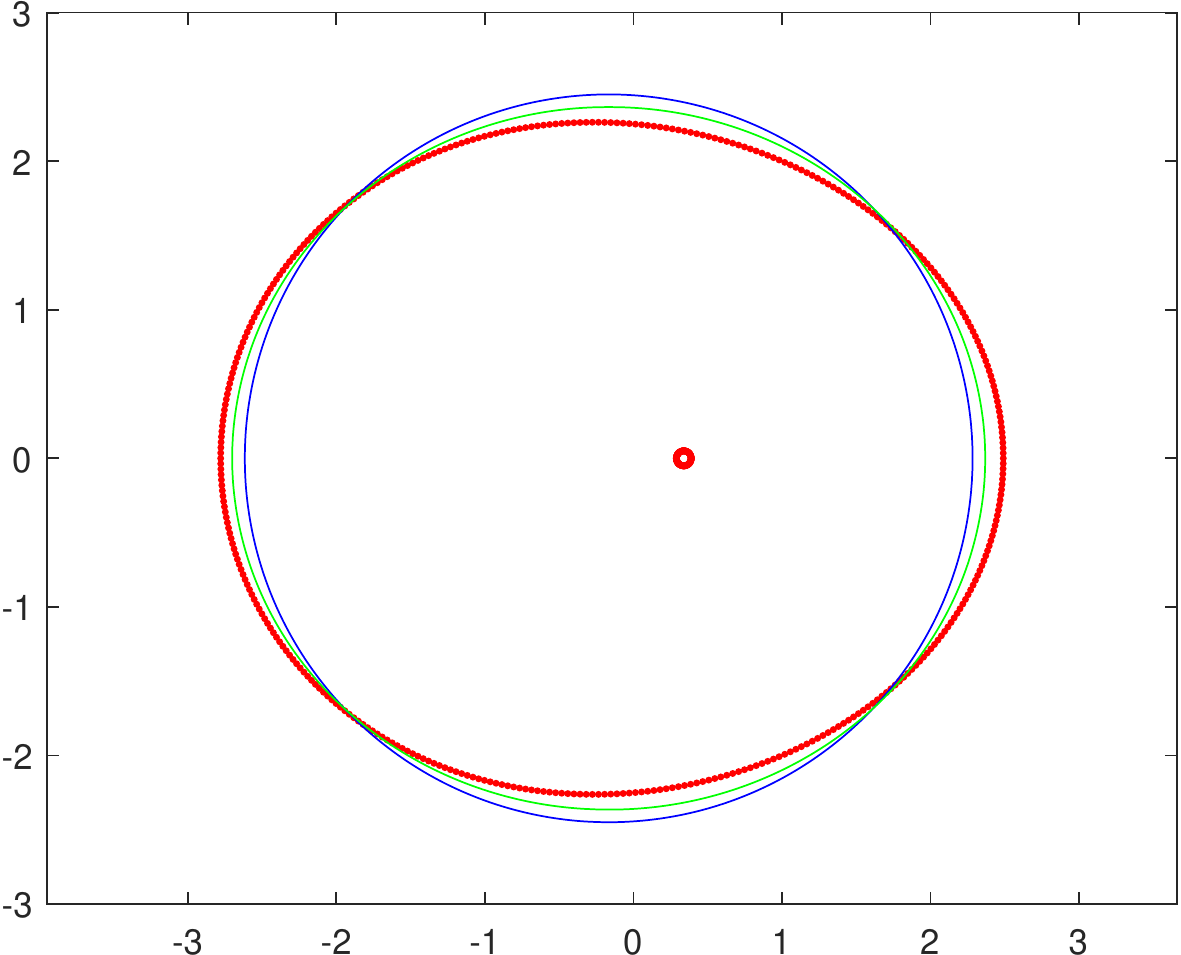}
\end{center}
\caption{The eigenvalues of $B(4e^{\ii\theta})$ plotted in red, the circular approximation in blue and the second approximation in green.}\label{figfive}
\end{figure}
\end{example}

The results of Theorem~\ref{vAu} and Corollary \ref{cor:curves} show how the curves $\Gamma_j$ can be described not only qualitatively, but as the examples show, also quantitatively the results are fairly sharp, certainly if we take into account the correction term to the circular approximation.

\paragraph{\bf Declarations:} $ $\smallskip

\paragraph{\bf Conflict of interest} The authors declare that they have no conflict of interest.\smallskip

\paragraph{\bf Funding}
The research of the second author was funded by the Priority Research Area SciMat under the program
Excellence Initiative – Research University at the Jagiellonian University in Kraków \smallskip

\paragraph{\bf Availability of data and material} Data sharing is not applicable to this article as no datasets were generated or analysed during the current study.\smallskip

\paragraph{\bf Code availability} Not applicable.

\end{document}